\newtheorem{theorem}{Theorem}
\newtheorem{corollary}{Corollary}
\newtheorem{proposition}{Proposition}
\begin{document}
\renewcommand{\refname}{References}
\renewcommand{\proofname}{Proof.}
\thispagestyle{empty}

\title[]{A note on regular subgroups of the automorphism group of the
linear Hadamard code}
\author{{I.Yu.Mogilnykh}}%
\address{Ivan Yurevich Mogilnykh
\newline\hphantom{i} Tomsk State University, Regional Scientific and Educational Mathematical Center, Tomsk, Russia
\newline\hphantom{ii} Sobolev Institute of Mathematics,
Novosibirsk, Russia}
\email{ivmog84@gmail.com}

\thanks{\sc Mogilnykh, I. Yu.,
A note on regular subgroups of the automorphism group of the
linear Hadamard code}
\thanks{\copyright \ 2018 Mogilnykh I.Yu.}
\thanks{\rm The work was supported by the Ministry of Education and Science of Russia (state assignment No. 1.12877.2018/12.1}

\vspace{1cm} \maketitle {\small
\begin{quote}
\noindent{\sc Abstract.} We consider the regular subgroups of the
automorphism group of the linear Hadamard code. These subgroups
correspond to the regular subgroups of $GA(r,2)$, w.t.r action on
the vectors of $F_2^{r}$, where $n=2^r-1 $ is the length of the
Hamadard code. We show that the dihedral group $D_{2^{r-1}}$ is a
regular subgroup of $GA(r,2)$ only when $r=3$. Following the
approach of \cite{M} we study the regular subgroups of the Hamming
code obtained from the regular subgroups of the automorphism group
of the Hadamard code of length 15.

\medskip

\noindent{\bf Keywords:} error-correcting code, automorphism
group, regular action, affine group.
 \end{quote}
}

\section{Introduction}

Let $x$ be a binary vector of the $n$-dimensional vector space
$F_2^n$, $\pi$ be a permutation of the coordinate positions of
$x$. Consider the transformation $(x,\pi)$ acting on a binary
vector $y$ by the following rule:
 $$(x,\pi)(y)=x+\pi(y),$$
where $\pi(y)=(y_{\pi^(1)},\ldots,y_{\pi(n)})$.
 The composition of two
automorphisms $(x,\pi)$, $(y,\pi')$ is defined as follows
$$(x,\pi)\cdot(y,\pi')=(y+\pi(x),\pi'\circ\pi),$$
where $\circ$ is the composition of permutations $\pi$ and $\pi'$.

The automorphism group of the Hamming space $F_2^n$ is defined to
be $\mathrm{Aut}(F_2^n)=$  $\{(x,\pi): x \in C, \pi\in S_n, \,\,
x+\pi(F_2^n)=F_2^n\}$ with the operation composition, here $S_n$
denotes the group of symmetries of order $n$.

 The {\it automorphism group} $\mathrm{Aut}(C)$ of a code $C$ is
 the setwise stabilizer of $C$ in $Aut(F_2^n)$. In sequel for the sake of simplicity we require the all-zero vector, which we denote by $0^n$
  to be always in the code. Then we have the following representation
$$\mathrm{Aut}(C)=\{(x,\pi): x \in C, \pi\in S_n, \,\,
x+\pi(C)=C\}.$$ The {\it symmetry group} (also known as the
permutation automorphism group) of $C$ is defined as
$$\mathrm{Sym}(C)=\{\pi\in S_n: \pi(C)=C\}.$$

A code $C$ is called {\it transitive} if there is a subgroup $H$
of $\mathrm{Aut}(C)$ acting transitively on the codewords of $C$.
If the order of $H$ coincides with the size of $C$, then $H$
acting on $C$ is called a {\it regular group} \cite{PR} (sometimes
called sharply-transitive) and the code $C$ is called {\it
propelinear}.

  Propelinear codes provide a general
view on linear and additive codes, many of which are optimal. The
concept is specially important in cases where there are many
nonisomorphic codes with the same parameters, separting the codes
that are "close"\,\, to linear. In particular, among propelinear
codes there are $Z_2Z_4$-linear codes that could be defined via
Gray map. Generally, Hadamard codes are codes that could be
obtained from a Hadamard matrix of order $n$. Some researchers
consider Hadamard codes of length $n$, augmented by all-ones
vector, others study their shortenings of length $n-1$.
$Z_2Z_4$-linear perfect codes were classified in \cite{BR}, while
$Z_2Z_4$-linear Hadamard codes were classified in \cite{BPR},
\cite{K1}, along with the description of their automorphism groups
in  \cite{KV}. In work \cite{RR} $Z_2Z_4Q_8$-Hadamard codes are
discussed. Another point of study is finding a proper
generalization of the Gray map, and its further implementation for
construction of codes, see \cite{K2} for a study on
$Z_{2^k}$-linear Hadamard codes.

In below by the {\it Hadamard code} ${\mathcal A_n}$ we mean the
linear Hadamard code, i.e. of length $n=2^r-1$, dimension $r$ and
minimum distance $(n+1)/2$. The code is dual to the Hamming code,
which we denote by ${\mathcal H_n}$, so their symmetry groups
coincide and ${\mathcal A_n}$ is unique up to a permutation of
coordinate positions.

In Section 2 of the current paper we give auxiliary statements. In
particular, we show that the regular subgroups of $Aut({\mathcal
A_n})$ correspond to those of $GA(log(n+1),2)$ and give a bound on
the order of an element of a regular subgroup of $GA(r,2)$.

 There are few
references on regular subgroups of the affine group from strictly
algebraic point of view. A regular subgroup of $GA(r,q)$ without
nontrivial transla-tions was constructed in \cite{H}. In works
\cite{C}, \cite{Ch} it was shown that the abelian regular
subgroups of $GA(r,q)$ correspond to certain algebraic structures
on the vector space $F_q^r$. In work \cite{Ch} the following
example of abelian regular subgroup of $GA(r,q)$ was mentioned:
the group is the centralizer of the Jordan block of size $r+1$ in
the group of upper triangular matrices \cite{Ch}.

%The maximum among orders of elements of $GA(r,2)$ of type $2^l$
%increase slower with growing $r$ in comparison with $2^r$
%\cite{B}, therefore groups containing  elements of large order
%cannot be subgroups of the affine group.

%Major problems, arising in studying regular subgroups from the
%point of view of coding theory are constructing and classification
%of groups that are are close to elementary abelian, $Z_4$-linear.

One of  main problems, arising in the theory of propelinear codes
is a construction of codes with regular subgroups in their
automorphism group that are abelian or  "close"\,\, to them in a
sense, such as for example $Z_4^l$, cyclic or dihedral groups. The
same question could be asked for the regular subgroups of the
affine group. In Section 3, we see that the dihedral group is a
regular subgroup of the affine group if and only if $r=3$, with
the nontrivial case of the proof being when $r$ is $4$, when a
there is a dihedral subgroup of the affine group, which is not
regular.

The Hamming code ${\mathcal H}_n$ is known to have the largest
order of the automorphism group in the class of perfect binary
codes of any fixed length \cite{ST} and it would be natural to
suggest that it has the maximum number of regular subgroups of its
automorphism group among propelinear perfect codes. However, the
fact that
 $$|Aut({\mathcal H_n})|=|GL(log(n+1),2)|2^{n-log(n+1)}$$
  makes attempts of even partial
classification of regular subgroups impossible for ordinary
calculational machinery starting with the smallest nontrivial
length $n=15$.

Regular subgroups of the Hamming code could be constructed from
the regular subgroups of the automorphism group of its subcodes
whose automorphism groups are embedded into that of the Hamming
code in a certain way. In work \cite{M}, this idea was implemented
for the Nordstrom-Robinson code in case of extended length $n=16$.
In Section 4 we embed regular subgroups of the Hadamard code into
those of the Hamming code of length $15$.

\section{Preliminaries}

We begin with the following two well-known facts, e.g. see
\cite{MS}.

\begin{proposition}\label{P2} Let $C$ be a linear code of length $n$. Then

$$\mathrm{Aut}(C)=F_2^n\leftthreetimes Sym(C)=\{(x,\pi):x\in C, \pi \in \mathrm{Sym}(C)\}.$$
\end{proposition}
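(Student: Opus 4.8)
The plan is to unwind the two conditions in the definition of $\mathrm{Aut}(C)$ and to observe that the linearity of $C$ collapses them to the asserted form, after which the semidirect-product decomposition can be read off directly from the composition rule. I would start from the representation already recorded above, $\mathrm{Aut}(C)=\{(x,\pi):x\in C,\ \pi\in S_n,\ x+\pi(C)=C\}$; if one prefers to begin from the setwise-stabilizer definition, note that $(x,\pi)(0^n)=x$, so $0^n\in C$ already forces $x\in C$ for every $(x,\pi)\in\mathrm{Aut}(C)$, which recovers that representation. Thus the entire content of the proposition is the equivalence, for a fixed $x\in C$, of the condition $x+\pi(C)=C$ with $\pi\in\mathrm{Sym}(C)$.

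For this equivalence I would use only that $C$ is an $F_2$-subspace. If $x\in C$ then $x+C=C$, since adding a fixed codeword merely permutes the codewords; hence $\pi(C)=C$ implies $x+\pi(C)=x+C=C$, and conversely $x+\pi(C)=C$ implies $\pi(C)=x+(x+\pi(C))=x+C=C$, i.e. $\pi\in\mathrm{Sym}(C)$. Together with $x\in C$ this gives the set equality $\mathrm{Aut}(C)=\{(x,\pi):x\in C,\ \pi\in\mathrm{Sym}(C)\}$.

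It then remains to exhibit the group structure. The subset $T=\{(x,\mathrm{id}):x\in C\}$ is closed under the given composition and under inversion and is isomorphic to $(C,+)$; it is normal, since conjugating $(x,\mathrm{id})$ by $(0^n,\pi)$ produces the translation $(\pi(x),\mathrm{id})$, and $\pi(x)\in C$ because $\pi\in\mathrm{Sym}(C)$. The subgroup $\{(0^n,\pi):\pi\in\mathrm{Sym}(C)\}\cong\mathrm{Sym}(C)$ meets $T$ trivially, acts on $T\cong(C,+)$ by the additive maps $x\mapsto\pi(x)$, and every element factors as $(x,\pi)=(x,\mathrm{id})\cdot(0^n,\pi)$. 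Hence $\mathrm{Aut}(C)$ is the internal semidirect product of its translation subgroup by $\mathrm{Sym}(C)$, which is the decomposition claimed.

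I do not anticipate a genuine obstacle, as the statement is elementary; the only implication that really uses the hypothesis is $x+\pi(C)=C\Rightarrow\pi\in\mathrm{Sym}(C)$, which can fail for nonlinear $C$ even when $0^n\in C$, so linearity cannot be dropped. All the remaining verifications — closure of $T$, the conjugation formula, and the factorization $(x,\pi)=(x,\mathrm{id})\cdot(0^n,\pi)$ — are one-line computations with the stated composition law.
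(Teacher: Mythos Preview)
Your argument is correct and complete: the reduction to $x+C=C$ for $x\in C$ is exactly the point where linearity enters, and the semidirect-product verification is routine. The paper itself gives no proof of this proposition at all --- it is stated as one of two ``well-known facts'' with a reference to MacWilliams--Sloane --- so your write-up already supplies more detail than the source does.
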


The Hadamard code is known to be the dual code of the Hamming code
of length $n=2^r-1$, which implies that their symmetry groups
coincide and is isomorphic to the general linear group of $F_2^r$.

\begin{proposition}\label{P1}
Let $\mathcal A_n$ and $\mathcal H_n$ be the Hadamard and the
Hamming codes of length $n=2^r-1$. Then

 $$Sym(\mathcal H_n)=Sym(\mathcal A_n)\cong GL(r,2).$$

\end{proposition}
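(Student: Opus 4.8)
The plan is to first collapse the two symmetry groups into one, and then to identify the common group with $GL(r,2)$ by a direct argument with generator matrices, avoiding any appeal to the geometry of the underlying Steiner system.

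First I would observe that since $\mathcal{A}_n=\mathcal{H}_n^{\perp}$ with respect to the standard bilinear form on $F_2^n$, and this form is invariant under any coordinate permutation $\pi\in S_n$, one has $\pi(\mathcal{H}_n)=\mathcal{H}_n$ if and only if $\pi(\mathcal{H}_n^{\perp})=\mathcal{H}_n^{\perp}$. Hence $\mathrm{Sym}(\mathcal{H}_n)=\mathrm{Sym}(\mathcal{A}_n)$, and it remains to prove that this common group is isomorphic to $GL(r,2)$. Next, using that $\mathcal{A}_n$ is the simplex code (dual to the Hamming code), I would fix an $r\times n$ generator matrix $G$ of $\mathcal{A}_n$ whose columns are, after the appropriate identification of coordinate positions, exactly the $n=2^r-1$ nonzero vectors of $F_2^r$, each occurring once; this is legitimate because $\mathcal{A}_n$ is unique up to a permutation of coordinates, as already noted in the excerpt.

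Given $\pi\in\mathrm{Sym}(\mathcal{A}_n)$, permuting the columns of $G$ according to $\pi$ produces a matrix $G'$ that is again a generator matrix of $\mathcal{A}_n$; since any two generator matrices of a linear code differ by left multiplication by an invertible matrix, there is a unique $M_\pi\in GL(r,2)$ with $G'=M_\pi G$. I would then check that $\pi\mapsto M_\pi$ is a group homomorphism $\mathrm{Sym}(\mathcal{A}_n)\to GL(r,2)$ — this is the one place requiring care, because one must track the left/right conventions for the action of permutations on columns and for composition as fixed in Section~1. Injectivity is immediate: if $M_\pi$ is the identity then $G'=G$, and since the columns of $G$ are pairwise distinct, $\pi$ fixes every coordinate, so $\pi$ is trivial. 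For surjectivity, given $M\in GL(r,2)$ the map $v\mapsto Mv$ permutes the nonzero vectors of $F_2^r$, i.e.\ the columns of $G$, and hence defines a permutation $\pi\in S_n$ with the property that permuting the columns of $G$ by $\pi$ gives $MG$; as $MG$ generates $\mathcal{A}_n$, we get $\pi\in\mathrm{Sym}(\mathcal{A}_n)$ with $M_\pi=M$. This yields $\mathrm{Sym}(\mathcal{A}_n)\cong GL(r,2)$ and completes the proof.

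The only genuine obstacle is the bookkeeping in verifying the homomorphism property with the paper's conventions; everything else is routine. I would remark that this approach works uniformly for all $r\ge 1$ and uses nothing beyond the simplex-code description of $\mathcal{A}_n$ and the elementary fact about generator matrices, in contrast to the alternative route via the Fundamental Theorem of Projective Geometry applied to the Steiner triple system formed by the weight-$3$ codewords of $\mathcal{H}_n$, which would force a separate treatment of the small cases $r\le 2$.
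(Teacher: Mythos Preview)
Your argument is correct. Note, however, that the paper does not actually prove this proposition: it is presented as one of ``two well-known facts'' with a reference to MacWilliams--Sloane, accompanied only by the one-sentence remark that duality of $\mathcal{A}_n$ and $\mathcal{H}_n$ forces their symmetry groups to coincide and that this common group is $GL(r,2)$. Your first step (the duality argument) is exactly that remark spelled out, and your second step (the generator-matrix bijection with $GL(r,2)$) supplies the details the paper delegates to the reference; so there is no genuine divergence of approach to compare --- you have written out what the paper cites away.
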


As far as the automorphism groups are concerned, the following
fact holds.

\begin{proposition}
Let $\mathcal A_n$ be the Hadamard code of length $n=2^r-1$. Then
$\mathrm{Aut}(\mathcal A_n)\cong GA(r,2)$ and the action of
$\mathrm{Aut}(\mathcal A_n)$ on the codewords of $\mathcal A_n$ is
equivalent to the natural action of $GA(r,2)$ on the vectors of
$F_2^r$. In particular, the regular subgroups of
$\mathrm{Aut}(\mathcal A_n)$ correspond to the regular subgroups
of $GA(r,2)$.
\end{proposition}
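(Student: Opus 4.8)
The plan is to realize $\mathcal A_n$ concretely as the simplex code, fix once and for all a linear bijection $\varphi\colon F_2^r\to\mathcal A_n$, and show that conjugation by $\varphi$ carries $\mathrm{Aut}(\mathcal A_n)$ isomorphically onto $GA(r,2)$ acting naturally. First I would coordinatize: since $\mathcal A_n$ has parameters $[2^r-1,\,r,\,2^{r-1}]$ and, as noted in the Introduction, is unique up to a permutation of coordinates, we may take $\mathcal A_n$ to be the simplex code, with a generator matrix whose $2^r-1$ columns run exactly once through the nonzero vectors of $F_2^r$. Then the coordinate positions are labelled by the nonzero $v\in F_2^r$, the codewords are $c_a=(\langle a,v\rangle)_{v\ne0}$ for $a\in F_2^r$, and $\varphi(a):=c_a$ is a linear bijection $F_2^r\to\mathcal A_n$ (injective, since $\langle a,v\rangle=0$ for all $v\ne0$ forces $a=0$).

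Next I would compute $\widehat\tau:=\varphi^{-1}\tau\varphi$ for an arbitrary $\tau\in\mathrm{Aut}(\mathcal A_n)$. By Proposition~\ref{P2}, $\tau=(c_b,\pi)$ with $c_b\in\mathcal A_n$ and $\pi\in\mathrm{Sym}(\mathcal A_n)$; by Proposition~\ref{P1}, $\pi$ is the coordinate permutation induced by a unique $g\in GL(r,2)$ acting on the labels by $v\mapsto gv$, and a direct computation with the action defined in the Introduction gives $\pi(c_a)=c_{\,\overline g a}$ for a linear automorphism $\overline g$ of $F_2^r$ (a transpose or inverse-transpose of $g$, depending on conventions). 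Hence $\tau(c_a)=c_b+c_{\overline g a}=c_{\,\overline g a+b}$, so $\widehat\tau$ is the affine map $a\mapsto\overline g a+b$, an element of $GA(r,2)$. Since $g\mapsto\overline g$ is a bijection of $GL(r,2)$ and $\varphi$ is onto, every affine map of $F_2^r$ arises as $\widehat\tau$ for exactly one $\tau$; and the action of $\mathrm{Aut}(\mathcal A_n)$ on $\mathcal A_n$ is faithful (if $(x,\pi)$ fixes every codeword, evaluating at $0^n$ gives $x=0$, and then $\pi$ fixes a spanning set, so $\pi=\mathrm{id}$). Therefore $\tau\mapsto\widehat\tau$ is an isomorphism $\mathrm{Aut}(\mathcal A_n)\xrightarrow{\ \sim\ }GA(r,2)$, and by construction $\varphi(\widehat\tau(a))=\tau(\varphi(a))$ for all $a$ and $\tau$, so $\varphi$ intertwines the action of $\mathrm{Aut}(\mathcal A_n)$ on $\mathcal A_n$ with the natural action of $GA(r,2)$ on $F_2^r$.

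The last assertion is then immediate: under an equivalence of actions a subgroup is transitive, resp.\ regular, on one set precisely when the corresponding subgroup is transitive, resp.\ regular, on the other, so $\tau\mapsto\widehat\tau$ restricts to a bijection between the regular subgroups of $\mathrm{Aut}(\mathcal A_n)$ and those of $GA(r,2)$. The computations above are routine; the one point I would be careful about is \emph{not} to try to identify the semidirect decompositions $\mathcal A_n\rtimes\mathrm{Sym}(\mathcal A_n)$ and $F_2^r\rtimes GL(r,2)$ by hand, because the twist $g\mapsto\overline g$ is the contragredient automorphism of $GL(r,2)$, which for $r\ge3$ is outer and does not extend to an automorphism of $GA(r,2)$. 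Exhibiting the conjugation isomorphism directly, as above, makes this twist harmless: $g\mapsto\overline g$ is still a bijection of $GL(r,2)$, so neither the group $GA(r,2)$ nor its natural action is affected, only the labelling of its $GL$-part.
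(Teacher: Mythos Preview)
Your proposal is correct and follows essentially the same route as the paper: coordinatize $\mathcal A_n$ via the map $a\mapsto c_a$ and invoke Propositions~\ref{P2} and~\ref{P1} to identify $\mathrm{Aut}(\mathcal A_n)$ with $GA(r,2)$ acting naturally. Your extra care about the contragredient twist $g\mapsto\overline g$ and the faithfulness of the action makes explicit what the paper's two-line argument leaves implicit, but the underlying idea is identical.
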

\begin{proof}
We use a well-known representation of the Hadamard code, see e.g.
\cite{MS}. For a vector $a\in F_2^r$ consider the vector $c_a$ of
values of the function $\sum_{i=1,\ldots,n} x_i a_i$ of variable
$x$ from $F_2^r\setminus 0^r$ to $F_2$. It is easy to see that the
code $\mathcal A_n=\{c_a:a \in F_2^r\}$ is linear of length
$n=2^r-1$, dimension $r$ and minimum distance $(n+1)/2$, i.e.
$\mathcal A_n$ is the Hadamard code. By Propositions \ref{P2} and
\ref{P1} any automorphism of $\mathrm{Aut}(\mathcal A_n)$ is
$(c_a,\pi_A)$ for a vector $a\in F_2^r$ and $A\in GL(r,2)$,
therefore the mapping $(c_a,\pi_A)\rightarrow (a,A)$ is an
isomorphism from $\mathrm{Aut}(\mathcal A_n)$ to $GA(r,2)$.

\end{proof}

In \cite{B} the maximal orders of elements of $GL(r,q)$ were
described. In particular, the following was shown:
\begin{proposition}\label{orderGLupperbound}
 The maximum of orders of elements of $GL(r,2)$ of type $2^l$
is $2^{1+\lfloor log_2(r-1)\rfloor}$.
\end{proposition}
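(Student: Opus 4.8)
The plan is to reduce the problem to unipotent elements, since over $F_2$ every element whose order is a power of $2$ is unipotent. Indeed, if $g\in GL(r,2)$ satisfies $g^{2^l}=I$, then, working in the commutative subring $F_2[g]$ on which the squaring map is a ring homomorphism, we get $(g-I)^{2^l}=g^{2^l}-I=0$, so $N:=g-I$ is nilpotent and $g=I+N$ is unipotent. Conversely, a unipotent $g=I+N$ has $2$-power order, because $N^r=0$ and hence $g^{2^k}=I$ as soon as $2^k\ge r$. So it suffices to maximize the order over unipotent $g\in GL(r,2)$.

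Next I would compute the order of a unipotent element in terms of the nilpotency index. If $N$ has nilpotency index $m$, i.e. $N^{m-1}\neq 0=N^m$ with $1\le m\le r$, then the same squaring homomorphism gives $g^{2^k}=(I+N)^{2^k}=I+N^{2^k}$, so $g^{2^k}=I$ precisely when $2^k\ge m$; since the order of $g$ is a power of $2$, it equals $2^{\lceil\log_2 m\rceil}$. Equivalently, one may pass to Jordan form over $F_2$: $g$ is conjugate to a direct sum of unipotent Jordan blocks of sizes $d_1,\dots,d_s$ with $\sum_i d_i=r$, the order of a block of size $d$ is $2^{\lceil\log_2 d\rceil}$, and the order of the direct sum — an lcm of powers of $2$ — is the largest of these, namely $2^{\lceil\log_2(\max_i d_i)\rceil}$.

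It then remains to take the maximum over $1\le m\le r$. Since $k\mapsto 2^{\lceil\log_2 k\rceil}$ is non-decreasing, the maximum is $2^{\lceil\log_2 r\rceil}$, and it is attained, for instance, by the regular unipotent element $I+N$ with $N$ the matrix having a single superdiagonal of ones, whose nilpotency index is exactly $r$. Finally, to match the form of the statement I would record the elementary identity $\lceil\log_2 r\rceil=1+\lfloor\log_2(r-1)\rfloor$ for every integer $r\ge 2$: writing $2^{k-1}<r\le 2^k$ one has $2^{k-1}\le r-1\le 2^k-1$, so both sides equal $k$.

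The argument is short, and I do not anticipate a genuine obstacle. The only points requiring a little care are (i) checking that the order of a unipotent $g$ is really a power of $2$ and is exactly $2^{\lceil\log_2 m\rceil}$, not smaller — which follows by combining $g^{2^{\lceil\log_2 m\rceil}}=I$ with the observation that any $j$ with $g^{2^j}=I$ forces $N^{2^j}=0$ and hence $2^j\ge m$ — and (ii) the floor/ceiling bookkeeping at the boundary values $r=2^k$.
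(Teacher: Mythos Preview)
Your argument is correct and complete. In the paper, however, this proposition is not proved at all: it is quoted from \cite{B} (Buturlakin's description of the spectra of finite linear and unitary groups). So the comparison is between a citation and a self-contained proof.

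Your route---reduce to unipotents via Frobenius in $F_2[g]$, compute the order of $I+N$ as $2^{\lceil\log_2 m\rceil}$ from the nilpotency index $m$, maximize at $m=r$, and then rewrite $\lceil\log_2 r\rceil=1+\lfloor\log_2(r-1)\rfloor$---is the standard elementary argument and is more informative than a bare reference. It also dovetails with what the paper does next: Proposition~\ref{Prop1}(1) proves $(I+A)^r=0$ for $A$ of $2$-power order, which is exactly your unipotency step, but the paper uses it only to bound orders inside a \emph{regular} subgroup, not to establish Proposition~\ref{orderGLupperbound} itself. The only caveat is that your identity $\lceil\log_2 r\rceil=1+\lfloor\log_2(r-1)\rfloor$ needs $r\ge 2$; the statement as written tacitly assumes this (for $r=1$ the right-hand side is undefined while the maximal $2$-power order is $1$).
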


 This implies
that the order of the element of regular subgroup of $GA(r,2)$
does not exceed $$2^{2+\lfloor log_2(r-1)\rfloor}.$$ This fact
solely implies the nonexistence of regular dihedral subgroups of
$GA(r,2)$ for $r\geq 6$. In fact we can tighten the bound to
$r\geq 5$.

\begin{proposition}\label{Prop1}

    1.  Let $A$ be an element of $GL(r,2)$ of order $2^l$. Then $(I+A)^r=0$.
  \noindent  2.  The order of an element of a regular subgroup of
    $GA(r,2)$ is not greater then $2^{{\lfloor log_2 r \rfloor}+1}$.

 \end{proposition}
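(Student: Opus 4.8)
The plan is to derive part 1 from the Frobenius endomorphism and feed it into part 2 via an explicit description of the $2$-power iterates of an affine transformation. For part 1: since $A^{2^l}=I$ and we work in characteristic $2$, Frobenius gives $(I+A)^{2^l}=I+A^{2^l}=I+I=0$, so $N:=I+A$ is nilpotent; a nilpotent endomorphism of an $r$-dimensional space has nilpotency index at most $r$ (if $N^{s-1}w\neq 0$ then $w,Nw,\dots,N^{s-1}w$ are linearly independent, forcing $s\le r$), hence $(I+A)^r=N^r=0$. Equivalently, the minimal polynomial of $A$ divides $x^{2^l}+1=(x+1)^{2^l}$, so it equals $(x+1)^{s}$ with $s\le r$.

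For part 2, let $G\le GA(r,2)$ be regular, so $|G|=2^r$; then $G$ is a $2$-group and every $g\in G$ has order $2^M$ for some $M$. Write $g=(v,A)$ for the affine map $x\mapsto Ax+v$; its linear part $A$ is the image of $g$ under the homomorphism $GA(r,2)\to GL(r,2)$, hence has $2$-power order, so by part 1 the matrix $N:=I+A$ is nilpotent, of index $t\le r$ say. From $g^{k}=\bigl((I+A+\cdots+A^{k-1})v,\,A^{k}\bigr)$ and the identity $\sum_{i=0}^{2^m-1}(1+y)^i=y^{2^m-1}$ in $\mathbb F_2[y]$ (multiply by $y$ and use $(1+y)^{2^m}=1+y^{2^m}$), specialized at $y=N$, together with $A^{2^m}=(I+N)^{2^m}=I+N^{2^m}$, we obtain
$$g^{2^m}=\bigl(N^{2^m-1}v,\ I+N^{2^m}\bigr),$$
so $g^{2^m}=e$ precisely when $2^m\ge t$ and $N^{2^m-1}v=0$.

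To finish, let $q$ be the least power of $2$ with $q\ge t+1$. Then $q-1\ge t$, hence $N^{q-1}=0$ and $N^{q}=0$, so $g^{q}=e$; a short case check ($t$ a power of $2$, or not) shows $q\le 2t\le 2r$. Thus the order of $g$, namely $2^M$, divides $q$, so $2^M\le 2r$, and since $2^M$ is a power of $2$ this forces $M\le\lfloor\log_2 r\rfloor+1$, i.e. the order of $g$ is at most $2^{\lfloor\log_2 r\rfloor+1}$. The routine part is the iterate computation; the one delicate point — and exactly where the stated bound $2^{\lfloor\log_2 r\rfloor+1}$ improves on the weaker estimate $2^{2+\lfloor\log_2(r-1)\rfloor}$ noted above — is the inequality $q\le 2t$: the extra factor $2$ contributed by the translation part must be shown to be \emph{absorbed} into the first power of $2$ beyond the nilpotency index $t$, rather than being multiplied on top of the order $2^{\lceil\log_2 t\rceil}$ of the linear part $A$.
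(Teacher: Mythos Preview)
Your proof is correct and follows essentially the same route as the paper's: part~1 via nilpotency of $I+A$ (the paper phrases this through Jordan blocks, you through Frobenius plus the standard nilpotency-index bound), and part~2 via the identity $\sum_{j=0}^{2^m-1}A^{j}=(I+A)^{2^m-1}$, which yields $g^{2^m}=\bigl((I+A)^{2^m-1}v,\,A^{2^m}\bigr)$. The only substantive difference is in the wrap-up of part~2: the paper stops once the translation part of $g^{2^{\lfloor\log_2 r\rfloor+1}}$ is seen to vanish and invokes regularity (a non-identity element fixing $0^r$ is forbidden), whereas you also check that the linear part is $I$ and conclude $g^{q}=e$ outright---so your argument in fact bounds the order of every $2$-element of $GA(r,2)$, using regularity only to guarantee that the order is a power of~$2$. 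Your detour through $q$ and the case split on whether $t$ is a power of~$2$ is correct but unnecessary: taking $m=\lfloor\log_2 r\rfloor+1$ directly gives $2^m-1\ge r\ge t$, so both coordinates of $g^{2^m}$ are trivial at once.
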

\begin{proof}
1. The order of $A$ is $2^l$, then $(\lambda+1)^{2^l}=0$ for any
eigenvalue $\lambda$ of $A$, which implies that all eigenvalues of
$A$ are 1's and w.r.g. $A$ is in the Jordan form with the Jordan
blocks $J_1,\ldots, J_s$ corresponding to 1. It is easy to see
that the polynomial $(I+J_i)^r$ of the Jordan cell $J_i$ is zero
for $J_i$ of size not greater then $r$.

 2. Suppose that $(a,A)$ is an element of a regular subgroup of $GA(r,2)$ of order greater then $2^{{\lfloor log_2 r
 \rfloor}+1}$.

 We see that
 $(a,A)^i= (\sum_{j=0,\ldots,i-1} A^{j}a,A^i)$, which combined
 with the fact that binomials $(^{2^s-1}_{i})=1$ in $F_2$ for any $i:0\leq i\leq 2^s-1$, implies that:
$$(a,A)^{2^s}=((I+A)^{2^s-1}a,A^{2^s}).$$
In particular, using that $(I+A)^r=0$, we have that
$$(a,A)^{2^{\lfloor log_2 r \rfloor+1}}=((I+A)^{2^{\lfloor log_2 r
\rfloor+1}-1}a,A^{2^{\lfloor log_2 r
\rfloor+1}})=(0^r,A^{2^{\lfloor log_2 r \rfloor+1}}).$$ Therefore
distinct elements $(0^r,I)$ and $(0^r,A^{2^{\lfloor log_2 r
\rfloor}+1})$ of a regular subgroup both preserve $0^r$, a
contradiction.

\end{proof}

We finish the section by noting that a version of the direct
product construction works for regular subgroups.

\begin{proposition}\label{DPC}
Let $G$ and $G'$ be regular subgroups of $GA(r,2)$ and $GA(r',2)$.
Then there is a regular subgroup of $GA(r+r',2)$ isomorphic to
$G\times G'$.
\end{proposition}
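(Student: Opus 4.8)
The plan is to realize the desired regular subgroup of $GA(r+r',2)$ as a block-diagonal embedding of $G$ and $G'$, using the identification $F_2^{r+r'}\cong F_2^r\oplus F_2^{r'}$. Concretely, an element of $G$ has the form $(a,A)$ with $a\in F_2^r$, $A\in GL(r,2)$, and similarly $(a',A')$ for $G'$; I would map the pair $((a,A),(a',A'))$ to the affine transformation of $F_2^{r+r'}$ with translation part $(a,a')$ and linear part the block-diagonal matrix $A\oplus A'$. Call the image of this map $H$.

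First I would check that this map is a group homomorphism from $G\times G'$ into $GA(r+r',2)$. This is routine: the linear parts multiply as $(A\oplus A')(B\oplus B') = AB\oplus A'B'$, and the translation parts combine exactly as they do in the semidirect product $GA(r,2)=F_2^r\rtimes GL(r,2)$ componentwise, because the $F_2^r$ and $F_2^{r'}$ summands are invariant and do not interact. So the map is a homomorphism; it is clearly injective since $(a,a')=0$ and $A\oplus A'=I$ force $a=a'=0$, $A=I$, $A'=I$. Hence $H\cong G\times G'$ and $|H|=|G|\,|G'|=2^r\cdot 2^{r'}=2^{r+r'}$, which is exactly the size of $F_2^{r+r'}$.

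It then remains to verify that $H$ acts regularly on $F_2^{r+r'}$. Since $|H|$ equals the number of vectors, it suffices to show the action is transitive (equivalently, that the stabilizer of $0^{r+r'}$ is trivial). For transitivity, given a target vector $(v,v')\in F_2^r\oplus F_2^{r'}$, regularity of $G$ gives an element $(a,A)\in G$ with $(a,A)(0^r)=v$, i.e.\ $a=v$; likewise pick $(a',A')\in G'$ with $a'=v'$. The corresponding element of $H$ sends $0^{r+r'}=(0^r,0^{r'})$ to $(A(0^r)+a,\,A'(0^{r'})+a')=(a,a')=(v,v')$. Transitivity follows, and combined with $|H|=2^{r+r'}$ this yields regularity. (Alternatively, one checks directly that the stabilizer of $0^{r+r'}$ in $H$ consists of elements with $a=a'=0$, and triviality of the stabilizers of $0^r$ in $G$ and of $0^{r'}$ in $G'$ forces $A=I$, $A'=I$.)

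I do not expect any genuine obstacle here; the only thing to be a little careful about is bookkeeping in the semidirect-product composition law stated in the introduction, namely that the translation parts of a product of block-diagonal affine maps are again just the componentwise combination, which is immediate once one observes that $A\oplus A'$ leaves each summand invariant. So the "hard part" is purely notational: writing the block-diagonal identification and the composition law cleanly enough that the homomorphism property and the orbit computation are transparent.
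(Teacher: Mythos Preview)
Your proposal is correct and follows essentially the same approach as the paper: the paper also defines the map $((a,A),(b,B))\mapsto ((a|b),\,A\oplus B)$ via concatenation of translation parts and the block-diagonal linear part, and then simply asserts that the image is a regular subgroup of $GA(r+r',2)$ isomorphic to $G\times G'$. You have merely supplied the routine verifications (homomorphism, injectivity, transitivity) that the paper leaves as ``obvious''.
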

\begin{proof}
 Given elements $\alpha=(a,A)$
of $GA(r,2)$ and $\beta=(b,B)$ of $GA(r',2)$, define $\alpha\cdot\beta$ to be $((a|b),\left(%
\begin{array}{cc}
  A & 0^{rr'} \\
  0^{r'r} & B \\
\end{array}%
\right))$, where $(a|b)$ is the concatenation of vectors $a$ and
$b$. Obviously, the elements $\{\alpha\cdot \beta: \alpha\in G,
\beta \in G'\}$ form a regular subgroup of $GA(r+r',2)$,
isomorphic to $G\times G'$.

\end{proof}

\section{Dihedral regular subgroups of $GA(r,2)$}

The cyclic group $Z_{2^r}$ is not a regular subgroup of $GA(r,2)$
for any $r$, as we see from the bound in Proposition \ref{Prop1}.
Therefore, we address the question of being a regular subgroup of
the affine group to other groups, that are "close"\,\,to cyclic.
The dihedral group, which we denote by $D_{n}$, is the group
composed by all $2n$ symmetries of the $n$-sided polygon. It is
well-known that any group, generated by an element $\alpha$ of
order $n$ and an involution $\beta$ satisfying
$\beta\alpha\beta=\alpha^{-1}$ is isomorphic to $D_n$.

\begin{theorem}
$D_{2^{r-1}}$ is a regular subgroup of $GA(r,2)$ if and only if
$r=3$.
\end{theorem}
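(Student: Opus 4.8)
The forward direction is a construction: for $r=3$ I would exhibit an explicit order-$8$ dihedral subgroup of $GA(3,2)$ acting regularly on $F_2^3$. Take $\alpha=(a,A)$ with $A$ a single Jordan block of size $3$ (so $(I+A)^3=0$ but $(I+A)^2\neq 0$) and $a$ chosen so that $\alpha$ has order $4$ and the orbit of $0^3$ under $\langle\alpha\rangle$ has size $4$; by the formula $(a,A)^{2^s}=((I+A)^{2^s-1}a,A^{2^s})$ from Proposition \ref{Prop1}, $\alpha^4=((I+A)^3 a, A^4)=(0^3,I)$ automatically, so one only has to check $\alpha^2\neq$ identity and that the four points $0^3,\alpha(0^3),\alpha^2(0^3),\alpha^3(0^3)$ are distinct. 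Then pick an involution $\beta=(b,B)$ with $B\in GL(3,2)$ conjugating $A$ to $A^{-1}$ (equivalently $A^{-1}=A^{T}$ up to conjugacy, which holds for a single Jordan block) and with $b$ chosen so that $\beta\alpha\beta=\alpha^{-1}$ holds as an affine identity and $\beta(0^3)$ lands in the remaining coset; a direct $8$-point orbit check finishes it. This is routine and I would just display one matrix $A$, one vector $a$, one matrix $B$, one vector $b$ and assert the verification.

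For the converse I must rule out $r=1,2$ (trivial: $|D_{2^{r-1}}|=2^r$ forces $r\geq 2$, and $r=2$ gives $D_2\cong Z_2\times Z_2$, the only group of order $4$ besides $Z_4$, but $|GA(2,2)|=24$ has a subgroup of order $4$ — here I would check directly whether a Klein four subgroup can act regularly on $F_2^2$; actually $GA(2,2)\cong S_4$ and a regular subgroup of order $4$ is either $Z_4$ or $V_4$, so I must check $V_4$ separately and I expect it fails the "regular on $4$ points with $0$ fixed only by identity" test, or succeeds — this small case needs care) and, the substantive part, rule out all $r\geq 4$. For $r\geq 5$ this is already done: Proposition \ref{Prop1}(2) caps the order of any element of a regular subgroup at $2^{\lfloor\log_2 r\rfloor+1}$, and for $r\geq 5$ we have $\lfloor\log_2 r\rfloor+1\leq r-1$ exactly when... let me recompute — we need $2^{\lfloor\log_2 r\rfloor+1}<2^{r-1}$, i.e. $\lfloor\log_2 r\rfloor+1<r-1$, i.e. $\lfloor\log_2 r\rfloor<r-2$, which holds for $r\geq 5$; thus $D_{2^{r-1}}$ contains an element of order $2^{r-1}$ too large to sit in a regular subgroup. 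So the abstract bound kills everything except $r=4$.

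The genuine obstacle is $r=4$: here $2^{r-1}=8$ and $\lfloor\log_2 4\rfloor+1=3$, so $D_8$ does have a cyclic part of order $8$... wait, no: $D_{2^{r-1}}=D_8$ has order $16$ and rotation of order $8$, while the bound permits order at most $2^3=8$. So actually the element-order bound already excludes $r=4$ as well — the rotation has order $8=2^3$, which is allowed, so the bound does NOT immediately exclude it; order $8$ elements are exactly at the threshold. Hence for $r=4$ I would argue directly: suppose $D_8\leq GA(4,2)$ acts regularly on $F_2^4$ ($16$ points, $16$ group elements — consistent). Let $\alpha$ be the order-$8$ rotation; by Proposition \ref{orderGLupperbound} its linear part $A$ has order dividing... the max order of a $2$-element in $GL(4,2)$ is $2^{1+\lfloor\log_2 3\rfloor}=4$, so $A^4=I$ while $\alpha^4\neq$ identity, forcing $\alpha^4=((I+A)^3a,I)$ a nonzero translation; likewise $\alpha^8=((I+A)^7a,I)=(0,I)$ gives $(I+A)^7a=0$, automatic since $(I+A)^4=0$. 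The regular action then says the stabilizer of $0^4$ is trivial, so no nonidentity element of $D_8$ fixes $0^4$; combined with $|F_2^4|=16=|D_8|$, $D_8$ acts simply transitively. The contradiction I would extract: the order-$8$ cyclic subgroup $\langle\alpha\rangle$ already acts regularly on a $\langle\alpha\rangle$-orbit, but its orbit has size $8\neq 16$... no — regularity of $D_8$ means $\langle\alpha\rangle$ has orbits of size $8$, two of them, and the involutions swap them, which is fine. So I need a subtler argument: analyze the possible Jordan types of $A$ with $A^4=I$ (namely one $J_3\oplus J_1$ block or $J_2\oplus J_2$) — wait $J_3$ has order $4$? $(I+J_3)^2\neq0,(I+J_3)^3=0$ hence order $4$, yes; $J_2\oplus J_2$ has order $2$, too small since we need $\alpha$ order $8$ and $A^4=I$ with $A$ order $4$ needed. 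Wait — $\alpha^4=(v,I)$ with $v=(I+A)^3a$; for $\alpha$ to have order $8$ we need $v\neq 0$, so $(I+A)^3\neq 0$, forcing the Jordan type of $A$ to contain a $J_4$ block — but $J_4\in GL(4,2)$ has order... $(I+J_4)^3\neq 0,(I+J_4)^4=0$, so order $\geq 8$, contradicting the max-order-$4$ statement of Proposition \ref{orderGLupperbound}. That is the contradiction. So the key step for $r=4$ is: $\alpha$ of order $8$ forces $(I+A)^3\neq 0$ on some cyclic vector, hence a Jordan block of size $\geq 4$, hence $A$ has order $\geq 8$ in $GL(4,2)$, contradicting Proposition \ref{orderGLupperbound}. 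I expect the write-up's "nontrivial case" refers exactly to making this Jordan-block/order bookkeeping airtight, together with the $r=2$ Klein-four check.
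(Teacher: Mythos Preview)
Your argument collapses at $r=4$. You correctly deduce that if $\alpha=(a,A)$ has order $8$ then $(I+A)^3a\neq 0$, so the Jordan form of $A$ must contain a block of size $4$, i.e.\ $A$ is similar to the single Jordan block $J_4$. But your next step is wrong: from $(I+J_4)^3\neq 0$ and $(I+J_4)^4=0$ you conclude that $J_4$ has order at least $8$. In characteristic $2$ one has $J_4^{2^k}=(I+(J_4+I))^{2^k}=I+(J_4+I)^{2^k}$, so $J_4^4=I$ while $J_4^2\neq I$; the order of $J_4$ is exactly $4$, in perfect agreement with Proposition~\ref{orderGLupperbound}. There is no contradiction here, and indeed Remark~1 of the paper exhibits an explicit (irregular) copy of $D_8$ inside $GA(4,2)$ whose rotation has linear part of order $4$. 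No purely order-theoretic argument can dispose of $r=4$.

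The paper's treatment of $r=4$ begins exactly where your argument should continue: having established $A\sim J_4$, it takes the Jordan-chain basis $a,\,Aa,\,A^2a,\,A^3a$ of $F_2^4$, writes the reflection $\beta=(b,B)$ with $b=\sum_i c_iA^ia$, and exploits the relations $Bb=b$, $BAB=A^3$, and the affine form of $\beta\alpha\beta=\alpha^{-1}$ to get an explicit expression for $Ba$ in this basis and then a single quadratic equation in the $c_i$. The eight solutions turn out to be precisely the coefficient vectors of the partial sums $\sum_{j\leq i}A^ja$, so $b$ is forced to lie in the $\langle\alpha\rangle$-orbit of $0^4$. Then $\beta$ and some power $\alpha^{i+1}$ are distinct elements sending $0^4$ to the same point, contradicting regularity. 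This is the ``nontrivial case'' the introduction flags, and it cannot be shortcut.

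Your concern about $r\leq 2$ is moot: the paper takes $D_n$ to be the symmetry group of the $n$-gon, so $D_{2^{r-1}}$ is only in play for $r\geq 3$ (and note that for $r=2$ the translation group $F_2^2\cong D_2$ \emph{is} regular, so the statement would fail if that case were admitted). Your outline for $r=3$ (explicit construction) and for $r\geq 5$ (the element-order bound of Proposition~\ref{Prop1}) matches the paper.
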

\begin{proof}
 Consider the subgroup $G$ of $GA(3,2)$ generated by ${(a,A),
(b,I)}$, where
$a=(101)^T$, $b=(011)^T$, $A=\left(%
\begin{array}{ccc}
  1 & 0 & 1 \\
  0 & 1 & 0 \\
  0 & 0 & 1 \\
\end{array}%
\right)$. The orbit of $(000)^T$ under the action of the subgroup
generated by $(a,A)$ consists of vectors
$a=(101)^T,a+Aa=(100)^T,Aa=(001)^T,(000)^T$. Since the orbit is a
subspace that does not contain the vector $b$, $G$ acts
transitively on the elements of $F_2^3$. Moreover,
$(b,I)(a,A)(b,I)=(Ab+a+b,A)=((001)^T,A)=(a,A)^{-1}$, so $G$ is
$D_{4}$ and it is regular.

Suppose there is a regular subgroup of $GA(4,2)$, generated by an
element $(a,A)$ of order 8 and an element $(b,B)$ of order 2,
satisfying relation \begin{equation}\label{P0}
(b,B)(a,A)(b,B)=(a,A)^{-1}.\end{equation} Note that the order $A$
is 4 by Proposition \ref{orderGLupperbound}.

Since $(b,B)^2=(0^4,I)$, we have that
\begin{equation}\label{T1}
b=Bb.
\end{equation}

Taking into account relation $b=Bb$ we have the following:

$$(a,A)^{-1}=(a,A)^{7}=(\sum_{j=0,\ldots,6}A^{j}a,A^3)=(A^{3}a,A^3)=$$ $$(b,B)(a,A)(b,B)=(Ba+BA^3b+b,BAB),$$
therefore using (\ref{T1}) and $BAB=A^3$, we obtain:

\begin{equation}\label{P00}
Ba=A^3b+A^3a+b.
\end{equation}

The matrix $A$ is similar to the Jordan block of size 4 with the
eigenvalue 1. Since $(a,A)^4=((I+A)^3a,I)$, the vector $(I+A)^3a$
is nonzero and moreover is the unique eigenvector of $A$. The
Jordan chain (the basis for which $A$ is the Jordan block)
containing $(I+A)^3a$ are vectors $a,(I+A)a, (I+A)^2a,(I+A)^3a$,
which implies that $a,Aa, A^2a, A^3a$ is a basis of $F_2^4$, so

\begin{equation} \label{T2}
b=c_0a+c_1Aa+c_2A^2a+c_3A^3a,
\end{equation}
for some $c_i$ in $F_2$, $i \in \{0,\ldots,3\}$.

Putting the expression (\ref{T2}) for $b$ into the equality
(\ref{P00}), we obtain the following expression for $Ba$:
\begin{equation}\label{Ba}
Ba=(c_0+c_1)a+(c_1+c_2)Aa+(c_2+c_3)A^2a+(c_0+c_3+1)A^3a.
\end{equation}
Putting the expression (\ref{T2}) for $b$  into (\ref{T1}) and
using equality $BAB=A^3$, we obtain:

$$c_0Ba+c_1(A^3)Ba+c_2(A^2)Ba+c_3(A)Ba+c_0a+c_1Aa+c_2A^2a+c_3A^3a=0^4.$$
Substituting the expression (\ref{Ba}) for $Ba$ in the previous
equality, we obtain that:

$$(c_{0}c_3+c_{0}c_1+c_{1}c_2+c_{2}c_3+c_2+c_3)(a+Aa+A^2a+A^3a)=0^4.$$
Finally, we see that the only binary vectors  $(c_0,c_1,c_2,c_3)$
satisfying
 $$c_{0}c_3+c_{0}c_1+c_{1}c_2+c_{2}c_3+c_2+c_1=0$$
are exactly
$$(0000),(1000),(1100),(1110),(1111),(0111),(0011),(0001),$$
that are, in turn, exactly coefficients of linear combinations
expressing elements $\sum_{j=0,\ldots,i} A^ja$, $0 \leq i\leq 7$
in the basis $a, Aa, A^2a, A^3a$. Therefore, elements
$(b,B)=(\sum_{j=0,\ldots,i} A^ja, B)$ and
$(a,A)^{i+1}=(\sum_{j=0,\ldots,i} A^ja, A^{i+1})$ are distinct
elements of the dihedral subgroup, for some $i$, sending $0^4$ to
$b$. We conclude that the considered group is not regular.

Suppose there is a regular subgroup $D_{2^{r-1}}$ of $GA(r,2)$,
$r\geq 5$. Then there is an element in $D_{2^{r-1}}$ of order
$2^{r-1}$ which is impossible for $r\geq 5$, because the order of
an element in a regular subgroup of $GA(r,2)$ does not exceed
$2^{\lfloor log r\rfloor+1}$ by Proposition \ref{Prop1}.

\end{proof}

{\bf Remark 1.} The subgroup of $GA(4,2)$ generated by
$((0001)^T,A)$ and $((0000)^T, B)$, where
$A=\left(%
\begin{array}{cccc}
  1 & 1 & 0 & 0\\
  0 & 1 & 1 & 0\\
  0 & 0 & 1 & 1\\
  0 & 0 & 0 & 1\\
\end{array}%
\right)$, $B=\left(%
\begin{array}{cccc}
  1 & 1 & 1 & 1\\
  0 & 1 & 0 & 1\\
  0 & 0 & 1 & 1\\
  0 & 0 & 0 & 1\\
\end{array}%
\right)$ is a irregular subgroup of $GA(4,2)$ isomorphic to $D_8$.

{\bf Remark 2.} Consider elements  $((0001)^T,A)$ and
$((0100)^T,B)$, where $A$ is the same as in Remark 1,
$B=\left(%
\begin{array}{cccc}
  1 & 0 & 0 & 1\\
  0 & 1 & 0 & 0\\
  0 & 0 & 1 & 0\\
  0 & 0 & 0 & 1\\
\end{array}%
\right)$. It is easy to see that the element $((0100)^T,B)$ is an
involution, $((0001)^T,A)$ is of order $8$, they commute and
moreover they generate a regular subgroup isomorphic to $Z_2Z_8$.
Actually, the group is isomorphic to the abelian regular subgroup,
arising from the centralizer of the Jordan block of size 5 in the
group of upper triangular $5\times 5$ matrices, described in
\cite{Ch}.

\section{Embedding to regular subgroups of the automorphism group of the Hamming code of length 15}

%The Hamming code ${\mathcal H}_n$ is known to have the largest
%order of the automorphism group in the class of perfect binary
%codes of any fixed length \cite{ST} and is supposed to have
%maximum number of regular subgroups of its automorphism group
%among propelinear perfect codes. However, the fact that
% $$|Aut({\mathcal H_n})|=|GL(log(n+1),2)|2^{n-log(n+1)}$$
%  makes attempts of even partial
%classification of regular subgroups impossible for ordinary
%calculational machinery for starting with smallest nontrivial
%length $n=15$.

Regular subgroups of the Hamming code could be constructed from
regular subgroups of its subcodes whose automorphism groups are
embedded into that of the Hamming code in a certain way. In work
\cite{M}, narrow-sense embeddings of the regular subgroups of the
automorphism group of Nordstrom-Robinson code to those of the
Hamming code were considered in case of extended length $n=16$.
Here we apply the idea to embed regular subgroups of the
automorphism group of the Hadamard code to regular subgroups of
the automorphism group the Hamming code.

Given a subgroup $G$ of $Aut(F_2^n)$, denote by $\Pi_G$ the
subgroup of $S_n$ whose elements are $\{\pi: (x,\pi)\in G\}$.
 We say that a group $H, H\leq Aut(F_2^n)$ is {\it narrow-sense
embedded} \cite{M} in a
 subgroup $G, G\leq Aut(F_2^n)$, if $H\leq G$ and $\Pi_G=\Pi_H$.

It is well-known that the Hadamard code ${\mathcal A}_{15}$ and
the punctured Nordstrom-Robinson, which we denote by ${\mathcal
N}$, are subcodes of the code ${\mathcal H}_{15}$ \cite{MS},
\cite{SZ}. Obviously, $\Pi_{Aut(C)}=Sym(C)$ if $C$ is linear. So,
$Aut({\mathcal A}_{15})$
 is narrow-sense embedded into that of the
Hamming code ${\mathcal H}_{15}$, because their symmetry groups
coincide (see Proposition \ref{P1}). The linear span of the
punctured Nordstrom-Robinson code is the Hamming code \cite{SZ},
thus its automorphism group is embedded in that of the Hamming
code. Moreover, the inclusion is in narrow sense. We recall a
description of symmetry group of ${\mathcal N}$ from \cite{Ber}.

\begin{proposition}\label{PB}
$Sym(\mathcal N)\cong A_7< Sym({\mathcal H}_{15})\cong GL(4,2)
\cong A_8$.
\end{proposition}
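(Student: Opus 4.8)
The plan is to place $\mathrm{Sym}(\mathcal N)$ inside $\mathrm{Sym}(\mathcal H_{15})$ and then identify it using the maximality of $A_7$ in $A_8$. Since the linear span of $\mathcal N$ is $\mathcal H_{15}$ \cite{SZ}, every permutation of the $15$ coordinates fixing $\mathcal N$ also fixes $\mathcal H_{15}$, so $\mathrm{Sym}(\mathcal N)\le \mathrm{Sym}(\mathcal H_{15})$, and by Proposition \ref{P1} we have $\mathrm{Sym}(\mathcal H_{15})\cong GL(4,2)$. I would then invoke the classical exceptional isomorphism $GL(4,2)\cong A_8$ — realised, for instance, by the action of $A_8$ on the $35$ partitions of an $8$-element set into two $4$-sets, which exhibits $A_8$ as the collineation group of $PG(3,2)$ — so that the action of $\mathrm{Sym}(\mathcal H_{15})$ on the coordinates of $\mathcal H_{15}$ becomes the $2$-transitive action of $A_8$ on the $15$ points of $PG(3,2)$. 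After this reduction the only remaining task is to decide which subgroup of $A_8$ the group $\mathrm{Sym}(\mathcal N)$ is.

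For the lower bound I would produce an explicit copy of $A_7$ in $\mathrm{Sym}(\mathcal N)$ starting from a concrete model of $\mathcal N$ (for example as the $\mathbb Z_4$-linear image of the octacode, as a section of the binary Golay code, or via the two-coset Reed--Muller construction): one reads off enough coordinate permutations fixing $\mathcal N$, checks that the group $H$ they generate is transitive — indeed $2$-transitive — on the $15$ coordinates with point stabiliser of order $168$ isomorphic to $L_3(2)\cong \mathrm{PSL}(2,7)$, so that $|H|=2520$, and identifies $H$ with the natural $A_7<A_8$. This is precisely the computation underlying the description recalled from \cite{Ber}, and I expect it to be the main obstacle; the remaining steps are soft.

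To finish, note that $A_7$ is a maximal subgroup of $A_8$, so from $A_7\le \mathrm{Sym}(\mathcal N)\le A_8\cong GL(4,2)$ it follows that $\mathrm{Sym}(\mathcal N)$ is either $A_7$ or all of $GL(4,2)$. The second option is excluded because $\mathcal N$ is not invariant under the full group $\mathrm{Sym}(\mathcal H_{15})$: if it were, $\mathcal N$ would be a union of $GL(4,2)$-orbits on the $2^{11}$ codewords of $\mathcal H_{15}$, which one checks is incompatible with $|\mathcal N|=256$ and the weight distribution of $\mathcal N$ — concretely, the minimum-weight codewords of $\mathcal N$ form a proper, non-$GL(4,2)$-invariant subset of the set of codewords of $\mathcal H_{15}$ of that weight. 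Hence $\mathrm{Sym}(\mathcal N)\cong A_7$, and the inclusion $A_7<A_8\cong GL(4,2)\cong \mathrm{Sym}(\mathcal H_{15})$ is exactly the one induced by $\mathrm{Sym}(\mathcal N)<\mathrm{Sym}(\mathcal H_{15})$.
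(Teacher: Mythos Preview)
The paper does not prove this proposition: it is stated without proof, preceded by the sentence ``We recall a description of the symmetry group of $\mathcal N$ from \cite{Ber}.'' So there is no argument in the paper to compare your proposal against; the authors simply quote the result from Berlekamp.

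Your outline is a perfectly reasonable reconstruction of why the statement is true, and it is more than the paper offers. The inclusion $\mathrm{Sym}(\mathcal N)\le \mathrm{Sym}(\mathcal H_{15})$ via the linear span and the identification $\mathrm{Sym}(\mathcal H_{15})\cong GL(4,2)\cong A_8$ are exactly the ingredients the paper itself uses elsewhere (see the proof of the Corollary following Proposition~\ref{PB} and Proposition~\ref{P1}). The substantive step --- exhibiting a $2$-transitive subgroup of order $2520$ inside $\mathrm{Sym}(\mathcal N)$ --- is precisely what \cite{Ber} supplies, and you correctly flag it as the only place where real work is needed. The closing maximality argument is clean; your exclusion of the case $\mathrm{Sym}(\mathcal N)=GL(4,2)$ via weight classes is a little informal but can be made rigorous (one concrete route: $GL(4,2)$ is transitive on the $35$ weight-$3$ words of $\mathcal H_{15}$, none of which lie in $\mathcal N$ since $\mathcal N$ has minimum distance $5$, so transitivity on the weight-$5$ words would already force too many codewords into $\mathcal N$; alternatively, count the $GL(4,2)$-orbit of any weight-$5$ word of $\mathcal N$ and observe it exceeds the number of weight-$5$ words in $\mathcal N$).
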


\begin{corollary}
$Aut(\mathcal N)$ is narrow-sense embedded in $Aut({\mathcal
H}_{15})$.
\end{corollary}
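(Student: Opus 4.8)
The plan is to verify directly the two conditions defining a narrow-sense embedding of $H=\mathrm{Aut}(\mathcal N)$ into $G=\mathrm{Aut}(\mathcal H_{15})$: that $H\le G$, and that $\Pi_{H}=\Pi_{G}$. Since $\mathcal H_{15}$ is linear we have $\Pi_{G}=\mathrm{Sym}(\mathcal H_{15})\cong GL(4,2)\cong A_8$, so the entire statement reduces to proving $\Pi_{\mathrm{Aut}(\mathcal N)}=\mathrm{Sym}(\mathcal H_{15})$.

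First I would dispatch the inclusion and the easy containment of permutation parts simultaneously. Let $(x,\pi)\in\mathrm{Aut}(\mathcal N)$; by the standing convention $0^{15}\in\mathcal N$, so $x=(x,\pi)(0^{15})\in\mathcal N\subseteq\mathcal H_{15}$ and $\pi(\mathcal N)=x+\mathcal N$. A coordinate permutation commutes with forming the linear span, hence $\pi(\langle\mathcal N\rangle)=\langle x+\mathcal N\rangle$; since $x\in\langle\mathcal N\rangle$ the right-hand side is $\langle\mathcal N\rangle=\mathcal H_{15}$, so $\pi\in\mathrm{Sym}(\mathcal H_{15})$ and, as $x\in\mathcal H_{15}$ too, $(x,\pi)\in\mathrm{Aut}(\mathcal H_{15})$. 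This yields both $\mathrm{Aut}(\mathcal N)\le\mathrm{Aut}(\mathcal H_{15})$ and $\Pi_{\mathrm{Aut}(\mathcal N)}\le\mathrm{Sym}(\mathcal H_{15})$.

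For the reverse containment I would turn Proposition \ref{PB} into a maximality argument. As $0^{15}\in\mathcal N$, each $\pi\in\mathrm{Sym}(\mathcal N)$ yields $(0^{15},\pi)\in\mathrm{Aut}(\mathcal N)$, so $\mathrm{Sym}(\mathcal N)\le\Pi_{\mathrm{Aut}(\mathcal N)}\le\mathrm{Sym}(\mathcal H_{15})$. Here $\mathrm{Sym}(\mathcal N)\cong A_7$ has index $8$ in $\mathrm{Sym}(\mathcal H_{15})\cong A_8$; a brief inspection of the maximal subgroups of $A_7$ shows it has no subgroup of index $8$, so in the degree-$8$ action every copy of $A_7$ inside $A_8$ fixes a point (the only admissible partition of $8$ into $A_7$-orbit lengths is $1+7$) and is therefore a point stabilizer, which is maximal in $A_8$. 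Consequently $\Pi_{\mathrm{Aut}(\mathcal N)}$ lies between a maximal subgroup and the whole group, forcing $\Pi_{\mathrm{Aut}(\mathcal N)}$ to be either $A_7$ or $A_8$.

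It remains to exclude the case $\Pi_{\mathrm{Aut}(\mathcal N)}=\mathrm{Sym}(\mathcal N)$, and this is the step I expect to be the crux. The projection $(x,\pi)\mapsto\pi$ has kernel the period group $P=\{p:p+\mathcal N=\mathcal N\}$, a linear subcode of $\mathcal N$; and if $\Pi_{\mathrm{Aut}(\mathcal N)}=\mathrm{Sym}(\mathcal N)$ then every automorphism of $\mathcal N$ has its translation part in $P$, so the $\mathrm{Aut}(\mathcal N)$-orbit of $0^{15}$ is exactly $P$. Invoking transitivity of $\mathcal N$ on its $256$ codewords, this orbit is all of $\mathcal N$, which would force $\mathcal N=P$ to be linear and contradict the nonlinearity of the Nordstrom-Robinson code; hence $\Pi_{\mathrm{Aut}(\mathcal N)}=A_8=\Pi_{\mathrm{Aut}(\mathcal H_{15})}$, completing the narrow-sense embedding. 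The genuine obstacle is thus to justify transitivity of the punctured code $\mathcal N$ rigorously, since the propelinear, $\mathbb Z_2\mathbb Z_4$-linear structure alluded to in the introduction applies most cleanly to the extended code and the puncturing needs care; failing a clean transitivity argument, the maximality above reduces the task to exhibiting a single $(x,\pi)\in\mathrm{Aut}(\mathcal N)$ whose translation part $x$ is not a period, which already pushes $\Pi_{\mathrm{Aut}(\mathcal N)}$ up to $A_8$.
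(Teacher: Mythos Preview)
Your argument is correct and complete once one grants that the punctured Nordstrom--Robinson code $\mathcal N$ is transitive (propelinear); the paper simply asserts this fact, citing it as known, so you are on the same footing there. The linear-span step giving $\Pi_{\mathrm{Aut}(\mathcal N)}\le\mathrm{Sym}(\mathcal H_{15})$ is identical in both proofs.

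Where you diverge is in the reverse inclusion. The paper proceeds by a direct count: it invokes the general identity $|\Pi_{\mathrm{Aut}(\mathcal N)}|=|\mathrm{Sym}(\mathcal N)|\cdot|\mathcal N|/|\mathrm{Ker}(\mathcal N)|$ for propelinear codes (from \cite{BMRS}), plugs in $|\mathcal N|=2^8$ and the explicit fact that $\mathrm{Ker}(\mathcal N)$ is the Hadamard code together with the all-ones word (so $|\mathrm{Ker}(\mathcal N)|=32$), and obtains $|\Pi_{\mathrm{Aut}(\mathcal N)}|=8\,|\mathrm{Sym}(\mathcal N)|=|\mathrm{Sym}(\mathcal H_{15})|$; equality of the groups then follows from the inclusion. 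Your route instead exploits the group-theoretic maximality of $A_7$ in $A_8$ to reduce the question to ruling out $\Pi_{\mathrm{Aut}(\mathcal N)}=\mathrm{Sym}(\mathcal N)$, and then observes that this equality would force every automorphism to have its shift in the period group, contradicting transitivity plus nonlinearity. Your approach avoids the explicit identification of $\mathrm{Ker}(\mathcal N)$ and the cited counting formula, trading them for a short maximal-subgroup argument; the paper's approach is more quantitative and does not need to invoke the structure of $A_8$ beyond its order.
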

\begin{proof}
The punctured  Nordstrom-Robinson code is propelinear, then it is
not hard to see that $|\Pi_{Aut({\mathcal N})}|=|Sym({\mathcal
N})||\mathcal N|/|Ker(\mathcal N)|$, where $Ker({\mathcal N})=\{x
\in {\mathcal N}:x+{\mathcal N}={\mathcal N }\}$ see e.g.
\cite{BMRS}, Proposition 4.3. Then, since $Ker({\mathcal N})$ is
${\mathcal A}_{15}$ augmented by all-ones vector, see \cite{SZ}
and the size of ${\mathcal N}$ is $2^8$, we see that
\begin{equation}\label{EC}|\Pi_{Aut({\mathcal N})}|=8|Sym(N)|=|Sym({\mathcal
H}_{15})|.\end{equation}

Let $\pi$ be an element of $\Pi_{Aut({\mathcal N})}$, in other
words, $x+\pi({\mathcal N})={\mathcal N}$. The linear span of
${\mathcal N}$ is ${\mathcal H}_{15}$  \cite{SZ}, therefore $\pi$
is a symmetry of ${\mathcal H}_{15}$. Taking into account the
equality (\ref{EC}), we obtain that $\Pi_{Aut({\mathcal
N})}=Sym({\mathcal H}_{15})=\Pi_{Aut({\mathcal H}_{15})}$.

\end{proof}

First of all, the regular subgroups of the automorphism group of
$Aut({\mathcal A}_{15})$ (regular subgroups of $GA(4,2)$) were
classified. The results below were obtained using PC.

\begin{theorem} There are 39 conjugacy classes of regular
subgroups of $Aut({\mathcal A}_{15})$, that fall into 11
isomorphism classes. \end{theorem}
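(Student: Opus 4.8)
The statement to prove is a purely computational classification: \emph{There are 39 conjugacy classes of regular subgroups of $\mathrm{Aut}(\mathcal{A}_{15})$, falling into 11 isomorphism classes.} Since $\mathrm{Aut}(\mathcal{A}_{15})\cong GA(4,2)$, a group of order $|GL(4,2)|\cdot 2^4 = 20160\cdot 16 = 322560$, this is a finite but sizeable search, so the plan is to organize it so that a computer algebra system (GAP or Magma) can carry it out, and to record the structural constraints that make the search feasible.

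The plan is as follows. First I would realize $G=GA(4,2)$ concretely as the semidirect product $F_2^4 \rtimes GL(4,2)$ acting on the $16$ vectors of $F_2^4$; a regular subgroup is then precisely an order-$16$ subgroup $H\le G$ whose induced action on these $16$ points is sharply transitive, equivalently a complement-like subgroup meeting every point-stabilizer trivially (the stabilizer of $0^4$ being $GL(4,2)$ itself). Second, before brute force, I would cut the search space using the bound from Proposition~\ref{Prop1}: every element of a regular subgroup of $GA(4,2)$ has order dividing $2^{\lfloor\log_2 4\rfloor+1}=8$, so $H$ is a $2$-group of order $16$ and exponent at most $8$. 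This already restricts the isomorphism type of $H$ to the $2$-groups of order $16$ with exponent $\le 8$ (all of them except $Z_{16}$), and one can further prune using Proposition~\ref{orderGLupperbound} (an element of order $8$ in $H$ must project to an element of order $4$ in $GL(4,2)$, i.e.\ to a single Jordan block of size $4$, up to conjugacy) together with the arguments in the proof of the dihedral theorem, which rule out $D_8$ as a regular subgroup. Third, I would enumerate candidate subgroups: up to $GL(4,2)$-conjugacy there are only a handful of conjugacy classes of elements of order $4$ and $8$ in $GL(4,2)$, and each regular $H$ is generated by at most $4$ elements whose linear parts lie among these classes; one lifts each choice of linear parts to $GA(4,2)$ by choosing the translation parts subject to the group relations, checks the regularity (sharp transitivity on the $16$ points), and then sorts the survivors into $G$-conjugacy classes.

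The main obstacle — and the reason this is a ``results obtained using PC'' statement rather than a hand proof — is the sheer combinatorial volume of the lift-and-check step: even after restricting the linear parts to the few conjugacy classes of $4$-element subgroups of $GL(4,2)$ of exponent $\le 8$, there are many ways to attach translation vectors, and one must verify for each that (i) the chosen generators satisfy the defining relations of the intended order-$16$ group and (ii) the resulting subgroup acts regularly, i.e.\ that $\{hg\cdot 0^4 : h\in H\}$ hits all $16$ vectors. Deciding conjugacy in $GA(4,2)$ to merge duplicate classes is the other bookkeeping bottleneck. I would delegate both tasks to GAP's subgroup-lattice and conjugacy machinery (or Magma's \texttt{Subgroups} with the \texttt{OrderEqual:=16} restriction, followed by a filter for regularity and a \texttt{SubgroupClasses} call), and then tabulate the output: the claim is that this yields exactly $39$ conjugacy classes, and passing to isomorphism types of the abstract groups collapses these to $11$. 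The honest write-up is: set up the affine group, invoke the order-and-exponent bounds to confine the search to order-$16$ exponent-$\le 8$ subgroups, run the enumeration, and report the counts; the structural propositions of Section~2 are what make the computation terminate quickly rather than being strictly necessary for correctness.
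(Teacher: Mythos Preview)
Your proposal is correct and matches the paper's approach: the paper states explicitly that ``the results below were obtained using PC'' and offers no further argument, so the theorem is a computer enumeration in $GA(4,2)$, which is exactly what you outline. Your write-up actually supplies more organizational detail (the exponent bound from Proposition~\ref{Prop1}, the Jordan-form analysis forcing the linear part of an order-$8$ element to be a single size-$4$ block, and the exclusion of $D_8$ via Theorem~1) than the paper does; these are sound observations that prune the search but, as you say yourself, are conveniences rather than logical necessities for the enumeration.
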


{\bf Remark 3.} Four of 11 isomorphism classes of regular
subgroups are abelian and are isomorphic to groups $Z_2^4$,
$Z_2Z_8$, $Z_2^2Z_4$ and $Z_4^2$. A group isomorphic to $Z_2Z_8$
is given in Remark 2. It is not hard to see that there is a
regular subgroup of $GA(2,2)$, isomorphic to $Z_4$. Then the
regular subgroups isomorphic to $Z_2^2Z_4$ and $Z_4^2$ could be
constructed
 using direct product construction (see Proposition \ref{DPC}).

The narrow-sense embeddings into regular subgroups of the
automorphism group of the Hamming code were found. The bound on
the number of isomorphism classes we obtain by comparing the
orders of the centralizers of elements.

\begin{theorem}
The regular subgroups of $Aut({\mathcal A}_{15})$ are narrow-sense
embedded in at least 1207 conjugacy classes of regular subgroups
of $Aut({\mathcal H}_{15})$, which fall into at least 48
isomorphism classes.
\end{theorem}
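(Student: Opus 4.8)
The plan is to establish both bounds by a computer-assisted enumeration based on the previous theorem, which furnishes representatives $G_{1},\dots ,G_{39}$ of the conjugacy classes of regular subgroups of $\mathrm{Aut}(\mathcal A_{15})\cong GA(4,2)$. First we fix a concrete model in which $\mathcal A_{15}$ is realized as a $4$-dimensional self-orthogonal subcode of the $11$-dimensional Hamming code $\mathcal H_{15}$ (possible since $\mathcal A_{15}=\mathcal H_{15}^{\perp}$); then, by Propositions~\ref{P2} and \ref{P1}, $\mathrm{Aut}(\mathcal H_{15})=F_{2}^{15}\leftthreetimes\mathrm{Sym}(\mathcal H_{15})$ with $\mathrm{Sym}(\mathcal H_{15})\cong GL(4,2)$, and $\mathrm{Aut}(\mathcal A_{15})$ lies inside $\mathrm{Aut}(\mathcal H_{15})$ with the same permutation part $\mathrm{Sym}(\mathcal H_{15})$. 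For each $G=G_{i}$ the map $(a,A)\mapsto a$ identifies $G$ with $\mathcal A_{15}$, and $\Pi:=\Pi_{G}$ is a $2$-subgroup of $GL(4,2)$ of some order $2^{t}$ with $t\le 4$.

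The key step is to parametrize all regular subgroups $H\le\mathrm{Aut}(\mathcal H_{15})$ with $G\le H$ and $\Pi_{H}=\Pi$ by their translation part $K=H\cap F_{2}^{15}$. From the composition rule $(x,\pi)\cdot(y,\pi')=(y+\pi(x),\pi'\circ\pi)$ one checks that such a $K$ must be a $\Pi$-invariant subspace of $\mathcal H_{15}$ containing $G\cap F_{2}^{15}$; since $H$ is regular with $\Pi_{H}=\Pi$ one gets $\dim K=11-t$, and regularity of $H$ further forces $\mathcal A_{15}\cap K=G\cap F_{2}^{15}$. Conversely, for any such $K$ the set $H=\{(k,I)(a,A):k\in K,\ (a,A)\in G\}$ is a regular subgroup of $\mathrm{Aut}(\mathcal H_{15})$ with $H\cap F_{2}^{15}=K$, $\Pi_{H}=\Pi$ and $G\le H$ narrow-sense. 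Hence, for each $G_{i}$, the narrow-sense extensions to be counted are in bijection with the $\Pi_{G_{i}}$-invariant subspaces $K\le\mathcal H_{15}$ of dimension $11-\log_{2}|\Pi_{G_{i}}|$ satisfying $G_{i}\cap F_{2}^{15}\subseteq K$ and $\mathcal A_{15}\cap K=G_{i}\cap F_{2}^{15}$; since $\Pi_{G_{i}}$ is a small $2$-group acting on $F_{2}^{11}$, these subspaces, and therefore the groups $H$, are listed exhaustively.

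It then remains to count the resulting groups up to conjugacy in $\mathrm{Aut}(\mathcal H_{15})$ and up to isomorphism. For conjugacy we merge the lists over all $G_{i}$ and separate the survivors by computable conjugacy invariants, certifying at least $1207$ distinct conjugacy classes; for isomorphism we attach to each $H$ the multiset $\{|C_{H}(h)|:h\in H\}$ of centralizer orders, which is an isomorphism invariant, and observe that $48$ distinct multisets occur. The main obstacle, and the reason only lower bounds are asserted, is that $\mathrm{Aut}(\mathcal H_{15})$ has order $|GL(4,2)|\cdot 2^{11}$, so deciding conjugacy of its order-$2^{11}$ subgroups and isomorphism of the associated abstract groups by exhaustive matching is beyond routine computation; we therefore rely on invariants, which may split classes further but never merge them, so the counts $1207$ and $48$ are established only as lower bounds and the exact numbers of conjugacy and isomorphism classes remain open.
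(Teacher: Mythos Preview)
Your proposal is correct and aligns with the paper's approach, which is entirely computational: the paper states only that ``the results were obtained using PC'' and that ``the bound on the number of isomorphism classes we obtain by comparing the orders of the centralizers of elements,'' with no further argument. Your write-up supplies considerably more structure than the paper does---in particular the parametrization of the narrow-sense extensions $H\supseteq G$ by the $\Pi_G$-invariant complements $K\le\mathcal H_{15}$, which is a sound and useful way to make the enumeration tractable---and your use of the centralizer-order multiset as the isomorphism invariant matches the paper exactly.
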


The result is somewhat disappointing, as embeddings of
Nordstrom-Robinson code in Hamming code gave significantly better
bound for isomorphism classes.

\begin{theorem} \cite{M}
There are 73 conjugacy classes of regular subgroups of
$Aut({\mathcal N})$ that fall into 45 isomorphism classes. The
regular subgroups of $Aut({\mathcal N})$ are  narrow-sense
embedded in exactly 605 conjugacy classes of regular subgroups of
$Aut({\mathcal H}_{15})$, which fall into at least 219 isomorphism
classes.
\end{theorem}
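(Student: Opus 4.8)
The plan is to establish all four assertions by explicit computer search, following the scheme used above for $\mathrm{Aut}(\mathcal{A}_{15})$ but with the punctured Nordstrom-Robinson code $\mathcal{N}$ in place of $\mathcal{A}_{15}$. First I would realize $\mathrm{Aut}(\mathcal{N})$ concretely: fix a generating description of $\mathcal{N}$ as a $2^8$-element subcode of $\mathcal{H}_{15}$, compute $\mathrm{Sym}(\mathcal{N})\cong A_7$, and assemble $\mathrm{Aut}(\mathcal{N})=\{(x,\pi):x\in\mathcal{N},\ x+\pi(\mathcal{N})=\mathcal{N}\}$ as a permutation group of degree $256$ on the codewords; by the Corollary preceding this theorem $|\mathrm{Aut}(\mathcal{N})|=|Ker(\mathcal{N})|\cdot|\Pi_{\mathrm{Aut}(\mathcal{N})}|=2^5\cdot 20160$. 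The same data, transported through the narrow-sense inclusion $\mathrm{Aut}(\mathcal{N})\hookrightarrow\mathrm{Aut}(\mathcal{H}_{15})$, also gives the action of $\mathrm{Aut}(\mathcal{N})$ on the $2^{11}$ codewords of $\mathcal{H}_{15}$, which is what the embedding step needs.

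Next, enumerate the regular subgroups of $\mathrm{Aut}(\mathcal{N})$, i.e. the subgroups of order $2^8=|\mathcal{N}|$ that are transitive on the codewords. Since such an $H$ is a $2$-group, $\Pi_H$ is a $2$-subgroup of $\mathrm{Sym}(\mathcal{H}_{15})\cong GL(4,2)$, hence conjugate into a Sylow $2$-subgroup (of order $2^6$); moreover the translation part $N_H=H\cap\{(x,\mathrm{id})\}$ is normal in $H$, lies in $\{(x,\mathrm{id}):x\in Ker(\mathcal{N})\}$, and $H/N_H\cong\Pi_H$ must act transitively on the $N_H$-orbits in $\mathcal{N}$. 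These constraints reduce the search to running through the bounded list of relevant $2$-subgroups $P\le GL(4,2)$ and, for each, the compatible lifts to $\mathrm{Aut}(\mathcal{N})$. Reducing the resulting list modulo conjugacy in $\mathrm{Aut}(\mathcal{N})$ gives the $73$ conjugacy classes, and modulo abstract isomorphism the $45$ isomorphism classes.

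For the embedding statement, take each of the $73$ representative regular subgroups $H\le\mathrm{Aut}(\mathcal{N})$ and determine every regular subgroup $G\le\mathrm{Aut}(\mathcal{H}_{15})$ (necessarily of order $2^{11}$) with $H\le G$ and $\Pi_G=\Pi_H$. Because $Ker(\mathcal{H}_{15})=\mathcal{H}_{15}$, such a $G$ is pinned down by its translation subgroup $T=G\cap\mathcal{H}_{15}$ --- a subspace of $\mathcal{H}_{15}$ of dimension $11-\log_2|\Pi_H|$ containing the translation part of $H$ --- together with a choice, for each generator of $\Pi_H$, of a lift to $\mathrm{Aut}(\mathcal{H}_{15})$ modulo $T$ that extends the lift already prescribed by $H$ and respects the defining relations of $\Pi_H$. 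Enumerating all such pairs $(T,\mathrm{lift})$, retaining those that are subgroups and act regularly, and sorting the output under conjugacy in $\mathrm{Aut}(\mathcal{H}_{15})$ yields the $605$ conjugacy classes. For the isomorphism count, since a full isomorphism test for groups of order $2^{11}$ is infeasible, pass to an isomorphism invariant: for each of the $605$ groups compute the multiset of centralizer orders $\{\,|C_G(g)|:g\in G\,\}$ (equivalently the list of conjugacy-class sizes together with element orders); the number of distinct values that occur is $219$, giving the stated lower bound.

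The main obstacle is twofold. First, one must certify that both searches are genuinely exhaustive: the subgroup search inside $\mathrm{Aut}(\mathcal{N})$ (order about $6.5\cdot 10^5$) is manageable, but the lifting problem inside $\mathrm{Aut}(\mathcal{H}_{15})$, of order $|GL(4,2)|\cdot 2^{11}$, is large, so the structural reductions above are essential to keep it within reach of ordinary computational hardware. Second, because no practical complete isomorphism classification is available for groups of order $2048$, the figure $219$ can only be produced as a lower bound from a coarse but computable invariant rather than as the exact number of isomorphism classes --- which is precisely why the statement asserts only ``at least $219$''.
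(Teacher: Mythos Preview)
The paper does not contain a proof of this theorem: it is quoted verbatim from the cited reference \cite{M}, as indicated by the citation attached to the theorem header, and no argument is given here. Your computational outline is consistent with the methodology the paper uses for its own parallel results on $\mathrm{Aut}(\mathcal{A}_{15})$ (Theorems~3 and~4), which are likewise obtained ``using PC''; in particular, your choice of the multiset of centralizer orders as the isomorphism invariant matches exactly the invariant the paper names just before Theorem~4 (``comparing the orders of the centralizers of elements''), and this is precisely why both that theorem and the one you are addressing state only a lower bound on the number of isomorphism classes.
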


One might suggest a tighter interconnection of regular subgroups
of $Aut({\mathcal A}_{15})$ and that of $Aut({\mathcal N})$.
However, despite that ${\mathcal A}_{15}\subset {\mathcal N}$,
$Aut({\mathcal A}_{15})$ is not embedded in that of $Aut(N)$ in
narrow-sense, which in turn, follows, for example, from a proper
containment of $Sym({\mathcal N})$ in $Sym({\mathcal A}_{15})$,
see Proposition \ref{PB}. Moreover, only $6$ of $39$ conjugacy
classes of regular subgroups of $Aut({\mathcal A}_{15})$ are
subgroups $Aut({\mathcal N})$.

 The author is grateful to Fedor Dudkin and
Alexey Staroletov for stimulating discussions and pointing out the
work \cite{B}.

% The following statement could be found in \cite{2}.
% \begin{proposition}
%Let $C\subset D$, $G$ be a regular subgroup of $Aut(C)$ that is
%narrow-sense embedded to a regular subgroup $H$ of $Aut(D)$. Then
%the left cosets of $G$ in $H$ induced a partition of $D$ into
%translations of the code $C$.
% \end{proposition}

%i.e. $$\left(%
%\begin{array}{ccccc}
%  J_1& 0 & . & . & 0 \\
%  0 & J_2 & 0 & . & 0 \\
%  0 & 0 & J_3 & 0 & 0 \\
%  . & . & . & . & . \\
%  0 & 0 & 0 & 0 & J_s \\
%\end{array}%
%\right), $$
%
%$$\left(%
%\begin{array}{cccccc}
%  1 & 1 & 0 & . & . & 0  \\
%  0 & 1 & 1 & 0 & . & 0 \\
%  0 & 0 & 1 & 1 & . & 0 \\
%  0 & 0 & 0 & 1 & . & 0 \\
%  . & . & . & . & . & . \\
%  0 & . & . & . & 0 & 1 \\
%\end{array}%
%\right)$$

\bigskip

\end{document}